\newtheorem{theorem}{Theorem}[section]
\newtheorem{lemma}[theorem]{Lemma}
\newtheorem{proposition}[theorem]{Proposition}
\newtheorem{corollary}[theorem]{Corollary}
\theoremstyle{definition}
\numberwithin{equation}{section}
\newtheorem{conjecture}[theorem]{Conjecture}
\def\imod#1{\allowbreak\mkern5mu({\operator@font mod}\,\,#1)}
\begin{document}

\title[Variations of Andrews-Beck type congruences]{Variations of Andrews-Beck type congruences}

\author{Song Heng Chan}

\author{Renrong Mao}

\author{Robert Osburn}

\address{Division of Mathematical Sciences, School of Physical and Mathematical Sciences, Nanyang Technological University, 21 Nanyang Link, Singapore 637371, Singapore}

\email{chansh@ntu.edu.sg}

\address{Department of Mathematics, Soochow University, SuZhou 215006, PR China}

\email{rrmao@suda.edu.cn}

\address{School of Mathematics and Statistics, University College Dublin, Belfield, Dublin 4, Ireland}

\email{robert.osburn@ucd.ie}

\subjclass[2010]{11P81, 05A17}
\keywords{Andrews-Beck type congruences, rank for overpartition pairs, Dyson's rank for overpartitions, $M_2$-rank for overpartitions, $M_2$-rank for partitions without repeated odd parts}

\dedicatory{In memory of Freeman Dyson}

\date{\today}

\begin{abstract}
We prove three variations of recent results due to Andrews on congruences for $NT(m,k,n)$, the total number of parts in the partitions of $n$ with rank congruent to $m$ modulo $k$. We also conjecture new congruences and relations for $NT(m,k,n)$ and for a related crank-type function.
\end{abstract}

\maketitle

\section{Introduction}

A partition $\lambda$ of a natural number $n$ is a non-increasing sequence of positive integers whose sum is $n$. For example, the $5$ partitions of $4$ are

\begin{equation*}
\begin{gathered}
4, 3+1, 2+2, 2+1+1, 1+1+1+1.
\end{gathered}
\end{equation*}
In 1944, Dyson \cite{dyson} introduced the rank of a partition as the largest part $\ell(\lambda)$ minus the number of parts $n(\lambda)$ and, based on numerical evidence, conjectured that this statistic gives a combinatorial explanation of Ramanujan's congruences for the partition function modulo $5$ and $7$. In 1954, Atkin and Swinnerton-Dyer \cite{asd} confirmed Dyson's conjecture by proving explicit formulas for the generating function of rank differences. Recently, Andrews employed these rank differences to prove some intriguing congruences (conjectured by Beck) for $NT(m,k,n)$, the total number of parts in the partitions of $n$ with rank congruent to $m$ modulo $k$. Specifically, we have (see Theorems 1 and 2 in \cite{andrews}) for all $n \in \mathbb{N}$
\begin{equation} \label{ntmod5}
NT(1, 5, 5n+i) - NT(4,5,5n+i) + 2NT(2,5,5n+i) - 2NT(3,5,5n+i) \equiv 0 \pmod{5}
\end{equation}
\noindent where $i=1$ or $4$ and
\begin{align} \label{ntmod7}
NT(1,7,7n+i) - NT(6,7,7n+i) & + NT(2,7,7n+i) - NT(5,7,7n+i) \nonumber \\
& - NT(3,7,7n+i) + NT(4,7,7n+i) \equiv 0 \pmod{7}
\end{align}
\noindent for $i=1$ or $5$.

It is now well-known that Dyson's rank is a special case of a general notion of rank which is defined on overpartition pairs \cite{bl}. Recall that an overpartition $\lambda$ of $n$ is a partition of $n$ in which the first occurrence of a number may be overlined. For example,  the $14$ overpartitions of $4$ are

\begin{equation*}
\begin{gathered}
4, \overline{4}, 3+1, \overline{3} + 1, 3 + \overline{1},
\overline{3} + \overline{1}, 2+2, \overline{2}
+ 2, 2+1+1, \overline{2} + 1 + 1, 2+ \overline{1} + 1, \\
\overline{2} + \overline{1} + 1, 1+1+1+1, \overline{1} + 1 + 1 +1.
\end{gathered}
\end{equation*}
An overpartition pair $(\lambda,\mu)$ of $n$ is a pair of overpartitions where the sum of all of the parts is $n$. We order the parts of $(\lambda,\mu)$ by requiring that for a positive integer $k$,
\begin{equation*} \label{order}
\overline{k}_{\lambda} > k_{\lambda} > \overline{k}_{\mu} >
k_{\mu},
\end{equation*}
where the subscript indicates to which of the two overpartitions
the part belongs. The rank of an overpartition pair $(\lambda,\mu)$ is
\begin{equation} \label{pairrank}
\ell((\lambda,\mu)) - n(\lambda) - \overline{n}(\mu) - \chi((\lambda,\mu)),
\end{equation}
where $\overline{n}(\cdot)$ is the number of overlined parts only and $\chi((\lambda,\mu))$ is defined to be $1$ if the largest part of $(\lambda,\mu)$ is non-overlined and in $\mu$, and $0$ otherwise. When $\mu$ is empty and $\lambda$ has no overlined parts, (\ref{pairrank}) becomes the rank of a partition. To illustrate, the rank of the overpartition pair
$((\overline{6},6,5,4,4,4,\overline{3},\overline{1}),(7,7,\overline{5},2,2,2))$ is $7 - 8 - 1 - 1 = - 3$, while the rank of the overpartition pair $((4,\overline{3},3,\overline{2},1),(4,4,4,\overline{1}))$ is $4 - 5 - 1 - 0 = -2$. In addition to recovering Dyson's rank, three other special cases of \eqref{pairrank} have turned out to be of significant interest: the rank of an overpartition \cite{Lo1}, the $M_2$-rank of a partition without repeated odd parts \cite{bg}, \cite{Lo-Os1} and the $M_2$-rank of an overpartition \cite{Lo2}. We recall these cases now. First, Dyson's rank extends in an obvious way to overpartitions. Second, the $M_2$-rank of a partition $\lambda$ without repeated odd parts is defined as

$$
\text{$M_{2}$-rank} \hspace{.025in}
(\lambda)=\Bigg\lceil{\frac{l(\lambda)}{2}}\Bigg\rceil - n(\lambda).
$$

\noindent Finally, the $M_2$-rank of an overpartition $\pi$ is given by

$$
\text{$M_2$-rank} \hspace{.025in} (\pi) = \bigg \lceil \frac{\ell(\pi)}{2}
\bigg \rceil - n(\pi) + n(\pi_o) - \chi(\pi)
$$
where $\pi_o$ is the subpartition consisting of the odd non-overlined parts and $\chi(\pi) = 1$ if the largest part of $\pi$ is odd and non-overlined and $\chi(\pi) = 0$ otherwise.

The purpose of this paper is to prove that instances of (\ref{ntmod5}) and (\ref{ntmod7}) also occur in these three additional situations. Let $\overline{NT2}(b,k,n)$ denote the total number of parts in the overpartitions of $n$ with $M_2$-rank congruent to $b$ modulo $k$. Our first result is the following.

\begin{theorem} \label{main1} For all $n \in \mathbb{N}$, we have
\begin{align}
\overline{NT2}(1,5,5n+2)& - \overline{NT2}(4,5,5n+2) \nonumber \\
& +2\overline{NT2}(2,5,5n+2)-2\overline{NT2}(3,5,5n+2)\equiv0\pmod{5}. \label{coneqm2o5}
\end{align}
\end{theorem}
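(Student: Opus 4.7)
The plan is to adapt the generating-function strategy of Andrews \cite{andrews} to the $M_2$-rank of overpartitions. Starting from Lovejoy's two-variable generating function $\overline{R2}(z,q) := \sum_{m,n} \overline{N2}(m,n) z^m q^n$ for the $M_2$-rank of overpartitions, I would first introduce a third variable $w$ marking the total number of parts, producing a refinement
$$\overline{R2}(w,z,q) = \sum_\pi w^{n(\pi)}\, z^{M_2\text{-rank}(\pi)}\, q^{|\pi|}.$$
Differentiating with respect to $w$ and setting $w=1$ then yields
$$\mathcal{T}(z,q) := \sum_\pi n(\pi)\, z^{M_2\text{-rank}(\pi)}\, q^{|\pi|},$$
which satisfies $\mathcal{T}(\zeta,q) = \sum_{b=0}^{4} \zeta^b\, T_b(q)$ for any primitive fifth root of unity $\zeta$, where $T_b(q) := \sum_n \overline{NT2}(b,5,n)q^n$.

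By Galois averaging, the combination $T_1 + 2T_2 - 2T_3 - T_4$ appearing in \eqref{coneqm2o5} can be written as an explicit $\mathbb{Q}(\sqrt 5)$-linear combination of $\mathcal{T}(\zeta^{\pm 1},q)$ and $\mathcal{T}(\zeta^{\pm 2},q)$. (Up to the nonzero factor $\zeta - \zeta^{-1}$, this uses the identity $1 + 2(\zeta + \zeta^{-1}) = \sqrt{5}$.) A short algebraic manipulation then reduces the theorem to showing that, for $j \in \{1,2\}$,
$$[q^{5n+2}]\, \mathcal{T}(\zeta^j,q) \in \sqrt{5}\,\mathbb{Z}[\zeta] \qquad \text{for all } n \geq 0.$$

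This in turn I would establish by a $5$-dissection of $\mathcal{T}(\zeta^j,q)$. The undifferentiated contribution from $\overline{R2}(\zeta^j,q)$ is controlled by Atkin--Swinnerton-Dyer-style $M_2$-rank difference formulas for overpartitions modulo $5$ developed by Lovejoy and collaborators. The $\partial_w$-derivative produces Lambert-type correction series such as $\sum n q^n/(1-q^n)$ multiplied by eta-quotients, whose $5$-dissections can be computed directly from the classical $5$-dissection of $(q;q)_\infty$ and related Ramanujan-style identities. Combining these dissections and restricting to the arithmetic progression $5n+2$ should isolate the desired congruence.

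The main obstacle is the final arithmetic cancellation in the residue class $i=2$: the explicit rank-difference terms and the Lambert correction terms must conspire so that each term contributed to the $q^{5n+2}$-coefficient lies in $\sqrt{5}\,\mathbb{Z}[\zeta]$. That the theorem is stated only for $i=2$, and not for other residues modulo $5$, reflects the delicacy of this matching and is the step where the bulk of the computational work will lie.
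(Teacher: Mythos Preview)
Your plan is in the right spirit but diverges from the paper's argument and leaves open precisely the step that the paper dispatches by a clean reduction to known results rather than fresh computation.

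Two concrete differences. First, the paper never evaluates at roots of unity. Following Andrews, it extracts the differences $\sum_n\bigl(\overline{NT2}(b,5,n)-\overline{NT2}(5-b,5,n)\bigr)q^n$ directly from the three-variable generating function via Corollary~\ref{legennntkb} (specialized as in \eqref{genovm2nt}). The essential algebraic point is that this difference automatically carries a factor $(x-1)$, so applying $\partial_x\big|_{x=1}$ simply sets $x=1$ in the remaining factor (see \eqref{partial}). No logarithmic-derivative ``correction series'' of the form $\sum nq^n/(1-q^n)$ ever arise; your mention of such terms suggests you have not yet isolated this simplification in the $M_2$-overpartition setting.

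Second, and more importantly, the ``delicate arithmetic cancellation'' you anticipate in the residue class $5n+2$ is not new work. After combining the $b=1$ and $b=2$ differences and reducing modulo~$5$, the paper obtains (up to the prefactor $2(-q)_\infty/(q)_\infty$) the Lambert series $\bar S_2(1)+3\bar S_2(3)$, which by \cite[Eq.~(4.9)]{Lo-Os2} is, up to sign, exactly the generating function for the ordinary $M_2$-rank difference $\overline N_2(1,5,n)-\overline N_2(2,5,n)$. The theorem then follows immediately from the known identity $\overline N_2(1,5,5n+2)=\overline N_2(2,5,5n+2)$ of \cite[Eq.~(1.10)]{Lo-Os2}. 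So the $5$-dissection you propose to carry out has, in effect, already been done in the literature; the missing insight is that the parts-counting congruence reduces modulo~$5$ to the ordinary $M_2$-rank congruence, rather than requiring an independent dissection.
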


If we let $\overline{NT}(b,k,n)$ be the total number of parts in the overpartitions of $n$ with rank congruent to $b$ modulo $k$, then our second result is as follows.

\begin{theorem} \label{main2} For all $n \in \mathbb{N}$, we have
\begin{align}
\overline{NT}(1,3,3n)-\overline{NT}(2,3,3n)&\equiv\overline{NT2}(1,3,3n)-\overline{NT2}(2,3,3n)\pmod{3}
\label{coneqov30}
\intertext{and}
\overline{NT}(1,3,3n+1)-\overline{NT}(2,3,3n+1)&\equiv\overline{NT2}(1,3,3n+1)-\overline{NT2}(2,3,3n+1) \pmod{3}.
\label{coneqov31}
\end{align}
\end{theorem}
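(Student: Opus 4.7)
The plan is to prove Theorem \ref{main2} by producing $q$-series formulas for both sides and comparing them modulo $3$. Fix $\zeta = e^{2\pi i/3}$ and define the part-weighted generating functions
\begin{equation*}
\tilde{A}(z,q) \;=\; \sum_{\lambda} n(\lambda)\,z^{\mathrm{rank}(\lambda)}q^{|\lambda|}, \qquad \tilde{A}_2(z,q) \;=\; \sum_{\lambda} n(\lambda)\,z^{\mathrm{rank}_2(\lambda)}q^{|\lambda|},
\end{equation*}
where the sums run over all overpartitions $\lambda$ and $\mathrm{rank}_2$ denotes the $M_2$-rank. The roots-of-unity filter then gives
\begin{equation*}
\sum_{n\geq 0}\bigl[\overline{NT}(1,3,n)-\overline{NT}(2,3,n)\bigr]q^n \;=\; \frac{\zeta^2-\zeta}{3}\bigl[\tilde{A}(\zeta,q)-\tilde{A}(\zeta^2,q)\bigr],
\end{equation*}
and the same identity with $\tilde{A}_2$ in place of $\tilde{A}$ for the $\overline{NT2}$ difference. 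Consequently Theorem \ref{main2} reduces to showing that
\begin{equation*}
H(q) \;:=\; \bigl[\tilde{A}(\zeta,q)-\tilde{A}(\zeta^2,q)\bigr] \;-\; \bigl[\tilde{A}_2(\zeta,q)-\tilde{A}_2(\zeta^2,q)\bigr]
\end{equation*}
has every coefficient of $q^n$ with $n\equiv 0,1\pmod 3$ divisible by $3$.

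To compute $\tilde A(\zeta,q)$ and $\tilde A_2(\zeta,q)$ I will upgrade the Hecke-type expansions of the rank and $M_2$-rank generating functions for overpartitions from \cite{Lo1, Lo2} to three-variable series that also track $n(\lambda)$, and then apply $y\,\partial_y|_{y=1}$. The specialization $z=\zeta$ is favorable because $(1-\zeta)(1-\zeta^{-1})=3$ and $(1-\zeta q^n)(1-\zeta^{-1}q^n)=1+q^n+q^{2n}=(1-q^{3n})/(1-q^n)$; after the root-of-unity filter these identities collapse each series into a Lambert-type sum with a theta prefactor. A cleaner alternative, which sidesteps the differentiation entirely, is to decompose $n(\lambda)=\sum_{j\geq 1}m_j(\lambda)$ and apply a Bailey-pair or marked-part construction, producing $\tilde A$ and $\tilde A_2$ directly as explicit Lambert series so that no formally singular terms from differentiating $(1-\zeta q^n)^{-1}$ arise.

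Next, I will $3$-dissect $H(q)$. Using $(q;q)_\infty^{3} \equiv (q^3;q^3)_\infty \pmod 3$ and the classical $3$-dissections of standard theta and Lambert series, each of $\tilde A(\zeta,q)$ and $\tilde A_2(\zeta,q)$ splits modulo $3$ into pieces of the form $T_0(q^3)+qT_1(q^3)+q^2T_2(q^3)$. The congruences \eqref{coneqov30} and \eqref{coneqov31} then reduce to the (finite) verification that the $T_0$-piece and $T_1$-piece of $\tilde A(\zeta,q)-\tilde A_2(\zeta,q)$, together with their complex conjugates, are each $\equiv 0 \pmod 3$. The expected pattern is that any residual nonvanishing contribution survives only in the $T_2$-piece, which matches the absence of the progression $3n+2$ from the theorem statement; the structural differences between the two ranks (the ceiling $\lceil \ell/2\rceil$ and the $\chi$ term) should feed into exactly that residue class after the cube-root-of-unity filter.

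The main obstacle will be producing clean closed forms for $\tilde A(\zeta,q)$ and $\tilde A_2(\zeta,q)$. The factors $(1-\zeta q^n)^{-1}$ and $(1-\zeta^{-1}q^n)^{-1}$ appearing in the Hecke-type expansions have formal singularities on differentiation that cancel only after one pairs them with their conjugates, so the order of operations (differentiate first versus specialize first) must be arranged carefully, and the Bailey-pair route is likely the safest. Once the closed forms are in hand, the $3$-dissection and the modulo-$3$ reduction are routine applications of standard eta-quotient identities, and the congruences \eqref{coneqov30}, \eqref{coneqov31} follow.
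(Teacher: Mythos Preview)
Your proposal is a plan rather than a proof: you outline a sequence of steps (``I will upgrade\dots'', ``I will $3$-dissect\dots'', ``the expected pattern is\dots'') but carry none of them out, and you yourself flag the production of closed forms for $\tilde A(\zeta,q)$ and $\tilde A_2(\zeta,q)$ as an unresolved ``main obstacle''. The broad architecture---compute the two part-weighted rank generating functions, subtract, reduce modulo $3$, and $3$-dissect---is indeed what the paper does, but the paper's mechanisms are different and more concrete. It bypasses the root-of-unity filter and the Hecke-type expansions entirely: it tracks the number of parts by an extra variable $x$ in the overpartition-pair generating function $\mathcal N(d,e,x,z;q)$, applies a Watson-type ${}_8\phi_7$ transformation (Proposition~\ref{main}) so that the $z$-dependence sits only in factors $(1-zq^n)^{-1}$ and $(1-xq^n/z)^{-1}$, and then (Corollary~\ref{legennntkb}) reads off $\sum_n\bigl(\overline{NT}(b,k,n)-\overline{NT}(k-b,k,n)\bigr)q^n$ as $-\partial_x\big|_{x=1}$ of an explicit series. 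After the algebra the $x$-dependence enters through a factor $(1-x)$, so the derivative is trivial via \eqref{partial}, and one lands directly on clean Lambert sums; the difference of the two rank series then reduces modulo $3$ to $\dfrac{2(-q)_\infty}{(q)_\infty}\sum_{n\geq 1}\dfrac{(-1)^n q^{n^2+n}(1+q^n)}{1+q^{3n}}$. Your proposed route---differentiating the Lovejoy Hecke forms or manufacturing a Bailey-pair analogue---would have to reproduce this simplification by hand, and you give no indication how.

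More seriously, the $3$-dissection step is not, as you suggest, a ``routine application of standard eta-quotient identities''. After invoking the known $3$-dissection of $(-q)_\infty/(q)_\infty$ one must still collapse a specific combination of three bilateral sums $\sum_{n\in\mathbb Z}(-1)^n q^{9n^2+an}/(1+q^{9n+b})$ into a single eta-quotient and show it is $\equiv 1\pmod 3$; this is precisely Lemma~\ref{l42}, a specialization of the generalized Lambert series identity of \cite{gen}, and it is the real work in the proof. Without an explicit identity of that type your plan does not close, and nothing in the proposal suggests you have one.
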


Finally, if $NT2(b,k,n)$ is the total number of parts in the partitions of $n$ without repeated odd parts with $M_2$-rank congruent to $b$ modulo $k$, then our third result is the following.

\begin{theorem} \label{main3} For all $n \in \mathbb{N}$, we have
\begin{align}
NT2(1,5,5n+1)& - NT2(4,5,5n+1) \nonumber \\
& +2NT2(2,5,5n+1)-2NT2(3,5,5n+1)\equiv0\pmod{5}. \label{coneqm25}
\end{align}
\end{theorem}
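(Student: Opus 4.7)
The plan is to adapt Andrews' strategy from \cite{andrews} to the $M_2$-rank of partitions without repeated odd parts. With $\zeta = e^{2\pi i/5}$, set
\begin{equation*}
T_2(z, q) := \sum_{\lambda} n(\lambda)\, z^{M_2\text{-rank}(\lambda)}\, q^{|\lambda|},
\end{equation*}
where the sum runs over partitions without repeated odd parts. Writing the coefficients $(c_0,c_1,c_2,c_3,c_4) = (0, 1, 2, -2, -1)$ appearing in \eqref{coneqm25} via their inverse discrete Fourier transform, and using $\sum_m c_m = 0$, one obtains
\begin{equation*}
\sum_{n \ge 0}\bigl[NT2(1,5,n) - NT2(4,5,n) + 2NT2(2,5,n) - 2NT2(3,5,n)\bigr] q^n = \frac{1}{5}\sum_{j=1}^{4}\hat{c}_j\,T_2(\zeta^{-j}, q),
\end{equation*}
where $\hat{c}_j = \sum_{m=0}^{4} c_m \zeta^{jm}$. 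This reduces the problem to understanding $T_2(z,q)$ at primitive fifth roots of unity.

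The first main step is to derive a tractable $q$-hypergeometric expression for $T_2(z, q)$ amenable to specialization at $z = \zeta^j$. I would obtain this by differentiating with respect to $w$ at $w = 1$ the three-variable refinement $\mathcal{R}_2(z, w, q) = \sum_{\lambda} z^{M_2\text{-rank}(\lambda)} w^{n(\lambda)} q^{|\lambda|}$, building on the two-variable $M_2$-rank generating function established by Berkovich-Garvan \cite{bg} and Lovejoy-Osburn \cite{Lo-Os1}.

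The second main step is an Atkin-Swinnerton-Dyer style 5-dissection. Using the known $M_2$-rank difference identities modulo 5, I would extract the $q^{5n+1}$ part of each $T_2(\zeta^{-j}, q)$ for $j=1,2,3,4$, pair $j \leftrightarrow 5-j$ to reduce to real combinations, and simplify via the Jacobi triple product (and related theta identities such as the quintuple product). The target is to display the $q^{5n+1}$ part of $\tfrac{1}{5}\sum_{j=1}^{4}\hat{c}_j T_2(\zeta^{-j}, q)$ as an element of $5\,\mathbb{Z}[\zeta][[q]]$, which proves \eqref{coneqm25}.

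The principal obstacle is the first step: producing a closed form for $\mathcal{R}_2(z, w, q)$ — or, more modestly, for $\partial_w \mathcal{R}_2|_{w=1}$ — that is compatible with the 5-dissection machinery. This is the analogue of Andrews' technical lemma in \cite{andrews}, adapted to the class of partitions without repeated odd parts and to the $M_2$-rank statistic. Once this refinement is in hand, the dissection and the final divisibility by 5 should follow by essentially the same pattern of theta-function manipulations used for Theorems \ref{main1} and \ref{main2}.
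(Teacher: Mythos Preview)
Your outline is pointed in the right direction but stops exactly where the work is: you flag as the ``principal obstacle'' the closed form for $\partial_w \mathcal{R}_2(z,w,q)\big|_{w=1}$ and do not supply it, so as written this is a plan rather than a proof. The paper fills precisely this gap with Proposition~\ref{main} and Corollary~\ref{legennntkb}: specializing the overpartition-pair generating function \eqref{genovpair} at $(q,d,e)\mapsto (q^2,0,1/q)$ and applying the Watson--Whipple transformation yields an explicit expression for $\sum_n\bigl(NT2(b,k,n)-NT2(k-b,k,n)\bigr)q^n$ as a derivative in a part-counting variable $x$, and the crucial point is that the summand carries a factor $(1-x)$, so that $\partial_x|_{x=1}$ collapses to a simple evaluation via \eqref{partial}. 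Without an identity of this type your differentiation step has nowhere to land.

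Even granting that step, your second phase is more elaborate than what is actually needed. You propose evaluating $T_2(\zeta^{-j},q)$ at primitive fifth roots and carrying out an Atkin--Swinnerton-Dyer 5-dissection with theta identities. The paper bypasses both the root-of-unity packaging and any new dissection: after combining the $b=1$ and $b=2$ differences and reducing modulo $5$, the right-hand side becomes exactly $\dfrac{(-q;q^2)_\infty}{(q^2;q^2)_\infty}\bigl(S_2(1)-2S_2(3)\bigr)$ with $S_2(b)=\sum_{n\neq 0}\dfrac{(-1)^n q^{2n^2+bn}}{1-q^{10n}}$, which by \cite[Eq.~(5.7)]{Lo-Os1} is $-\sum_n\bigl(N_2(1,5,n)-N_2(2,5,n)\bigr)q^n$. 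The congruence \eqref{coneqm25} then follows immediately from the already-proved rank equality $N_2(1,5,5n+1)=N_2(2,5,5n+1)$ \cite[Eq.~(1.7)]{Lo-Os1}. So the efficient route is not a fresh 5-dissection but a reduction modulo $5$ back to the ordinary $M_2$-rank differences, for which the dissection has already been done in the literature.
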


The paper is organized as follows. In Section 2, we establish the generating function for the rank of an overpartition pair which also keeps track of the total number of parts. Upon appropriate specializations, this result leads to the generating functions for $\overline{NT2}(b,k,n)$, $\overline{NT}(b,k,n)$ and $NT2(b,k,n)$. We also record a key result necessary for the proof of Theorem \ref{main2}. In Section 3, we prove Theorems \ref{main1}--\ref{main3}. In Section 4, we make some concluding remarks concerning future directions.

\section{Preliminaries}

We first recall the standard $q$-hypergeometric notation

\begin{align}
(a)_{n} & = (a;q)_{n} := \prod_{k=1}^{n} (1-aq^{k-1}), \nonumber \\
(a_1, \ldots, a_m)_{n} & = (a_1, \ldots, a_m; q)_{n} := (a_1)_{n}, \cdots (a_m)_{n} \nonumber
 \intertext{and }\nonumber \\
[a_1, \ldots, a_m]_{n} & = [a_1, \ldots, a_m; q]_{n} = (a_1, q/a_1, \ldots, a_m, q/a_m)_{n}, \nonumber
\end{align}

\noindent valid for $n \in \mathbb{N} \cup \{\infty\}$.

Let $N(r,s,t,m,n)$ be the number of overpartition pairs $(\lambda, \mu)$ of $n$ with rank $m$, such that $r$ is the number of overlined parts in $\lambda$ plus the number of non-overlined parts in $\mu$, $s$ is the number of parts in $\mu$ and $t$ is the total number of parts in $(\lambda,\mu)$. 	

\begin{lemma}
We have
\begin{align}\label{genovpair}	
\mathcal{N}(d, e, x, z ; q):=\sum_{r, s, t, n \geq 0 \atop m \in \mathbb{Z}} N(r, s,t, m, n) d^{r} e^{s}x^t z^{m} q^{n}=\sum_{n \geq 0} \frac{(-1 / d,-1 / e)_{n}(xd e q)^{n}}{(z q, xq / z)_{n}}.
\end{align}
\end{lemma}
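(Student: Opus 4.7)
This identity extends the rank generating function for overpartition pairs of Bringmann and Lovejoy \cite{bl} (the $x=1$ specialization) by threading an additional parameter $x$ through to track the total number of parts. My plan is to adapt their combinatorial proof via a Durfee-type dissection that is compatible with the ordering $\overline{k}_{\lambda} > k_{\lambda} > \overline{k}_{\mu} > k_{\mu}$.

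Write the rank as $\ell((\lambda,\mu)) - n^*$ where $n^* := n(\lambda) + \overline{n}(\mu) + \chi((\lambda,\mu))$, and dissect the left-hand side by the value of $n = n^*$. For fixed $n$, the overpartition pair decomposes into three pieces: a ``core'' of $n$ parts of minimum size $1$ contributing $(xdeq)^n$ (with $x^n$ tracking the part count, $q^n$ the minimum size contribution, and $(de)^n$ the base type-weight); a ``right residual'' of columns that raise the largest part $\ell$, generated by $(zq;q)_n^{-1}$, each column weighted by $z$ (for its contribution to $\ell$) and $q^j$ (for its size $j \leq n$); and a ``below residual'' of additional non-overlined parts of $\lambda$ bounded by $n$, generated by $(xq/z;q)_n^{-1}$, each such part weighted by $x$ (part count), $z^{-1}$ (rank decrement), and $q^j$ (size). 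The remaining overlining choices in $\lambda$ and in $\mu$ are then encoded by $(-1/d;q)_n$ and $(-1/e;q)_n$, which upon expansion as sums over subsets of $\{0,1,\ldots,n-1\}$ recover the correct powers of $d$ and $e$ for the counts $r$ and $s$. Summing the product over $n$ yields the claimed right-hand side.

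The main obstacle is the careful handling of $\chi((\lambda,\mu))$, which toggles based on whether the largest part is a non-overlined part of $\mu$. This indicator must be absorbed into the Durfee dissection via a case split on the four possible types of the largest part (overlined or non-overlined, in $\lambda$ or in $\mu$), so that every configuration is counted exactly once with the correct weight and the ordering $\overline{k}_{\lambda} > k_{\lambda} > \overline{k}_{\mu} > k_{\mu}$ is respected. Reconciling the $\chi$-adjustment with the assignment of parts to the ``core,'' ``right,'' and ``below'' pieces is the key technical step to verify.
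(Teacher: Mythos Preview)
Your outline does not match the paper's argument and, as written, contains a real gap. The paper does not interpret the right-hand side directly. Instead, following Bringmann--Lovejoy, it splits the generating function according to the four possible types of the largest part (overlined/non-overlined, in $\lambda$/in $\mu$), obtains four explicit series indexed by the value of the largest part, and combines them algebraically into
\[
1+\frac{(x+dx)(1+e)q}{(1-xq/z)(1-xedq)}\sum_{n \geq 0} \frac{q^{n}z^{n}(-qex/z, -qdx/z)_{n}}{(xq^2/z, xedq^2)_{n}}.
\]
Only then does a ${}_3\phi_2$ transformation (Gasper--Rahman, Eq.~(3.27)) convert this into $\sum_{n \ge 0}\frac{(-1/d,-1/e)_n(xdeq)^n}{(zq,xq/z)_n}$. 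In particular, the $\chi$-indicator is not ``absorbed into a Durfee dissection'': it is exactly what forces the four-case split in the first place, and the passage to the final closed form is analytic, not bijective.

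The concrete problem with your proposed decomposition is the identification $n = n^{*} = n(\lambda)+\overline{n}(\mu)+\chi$. You then let $(xq/z;q)_n^{-1}$ generate ``additional non-overlined parts of $\lambda$ bounded by $n$,'' but every such part contributes to $n(\lambda)$ and hence to $n^{*}$ itself, so the index $n$ cannot simultaneously be $n^{*}$ and exclude those parts. Relatedly, your reading of the factors $(xdeq)^n$, $(-1/d)_n$, $(-1/e)_n$ is not tied to a specific core configuration: there is no $q^{n^2}$ in the summand, so this is not a Durfee-square picture, and the products $(-1/d)_n,(-1/e)_n$ do not obviously correspond to overlining choices on a fixed $n$-part core in the way you suggest. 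You yourself flag the $\chi$-reconciliation as the ``key technical step to verify,'' and that is precisely where the argument is missing; the paper's four-way case split is the standard device that handles it cleanly, after which the desired form is reached by a $q$-series transformation rather than by combinatorics alone.
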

\begin{proof}
We follow the proof of Proposition 2.1 in \cite{bl}. We split the overpartition pairs into four
cases, depending on whether the largest part is overlined or not and
whether it is in $\lambda$ or $\mu$ to get four series. For
example, the series
\begin{align*}
	\sum_{n \geq 1} \frac{exq^{n}z^{n-1}(-qex/z, -qdx/z)_{n-1}}{(xq/z)_{n-1}(xedq)_n}
\end{align*}
is the generating function for overpartition pairs  whose largest part $n$ is in $\mu$ and overlined, where the
exponent of $q$ is the number being partitioned, the exponent
of $z$ is the rank, the exponent of $d$ is the number of overlined parts in $\lambda$ plus the number of non-overlined parts in $\mu$, the exponent of $e$ is the number of parts in $\mu$ and the exponent of $x$ is the total number of parts in $(\lambda,\mu)$. Combining this with the other three cases, we obtain
\begin{align}	
&\sum_{r, s, t, n \geq 0 \atop m \in \mathbb{Z}} N(r, s,t, m, n) d^{r} e^{s}x^t z^{m} q^{n}\nonumber\\
&=1+\sum_{n \geq 1} \frac{exq^{n}z^{n-1}(-qex/z, -qdx/z)_{n-1}}{(xq/z)_{n-1}(xedq)_n}+\sum_{n \geq 1} \frac{dexq^{n}z^{n-1}(-qex/z, -qdx/z)_{n-1}}{(xq/z)_{n-1}(xedq)_n}
\nonumber\\
&\qquad+\sum_{n \geq 1} \frac{dxq^{n}z^{n-1}(-qex/z)_{n}(-qdx/z)_{n-1}}{(xq/z, xedq)_n}+\sum_{n \geq 1} \frac{xq^{n}z^{n-1}(-qex/z)_{n}(-qdx/z)_{n-1}}{(xq/z, xedq)_n}
\nonumber\\
&=1+(x+dx)\left(\sum_{n \geq 1} \frac{eq^{n}z^{n-1}(-qex/z, -qdx/z)_{n-1}}{(xq/z)_{n-1}(xedq)_n}
+\sum_{n \geq 1} \frac{q^{n}z^{n-1}(-qex/z)_{n}(-qdx/z)_{n-1}}{(xq/z, xedq)_n}\right)
\nonumber\\
&=1+(x+dx)\sum_{n \geq 1} \frac{q^{n}z^{n-1}(-qex/z, -qdx/z)_{n-1}}{(xq/z)_{n-1}(xedq)_n}\left(e+\frac{1+q^nex/z}{1-q^nx/z}
\right)
\nonumber\\
&=1+(x+dx)(1+e)\sum_{n \geq 1} \frac{q^{n}z^{n-1}(-qex/z, -qdx/z)_{n-1}}{(xq/z, xedq)_n}
\nonumber\\
&=1+(x+dx)(1+e)\sum_{n \geq 0} \frac{q^{n+1}z^{n}(-qex/z, -qdx/z)_{n}}{(xq/z, xedq)_{n+1}}
\nonumber\\
&=1+\frac{(x+dx)(1+e)q}{(1-xq/z)(1-xedq)}\sum_{n \geq 0} \frac{q^{n}z^{n}(-qex/z, -qdx/z)_{n}}{(xq^2/z, xedq^2)_{n}}. \label{pgeovpair}
\end{align}
Replacing $(a, b, c, d, e)$ by $(q, -qdx/z, -qex/z, q^2x/z, deq^2x)$ in \cite[Eq. (3.27)]{gr}, we find that
 \begin{align}\label{fi32}
\sum_{n \geq 0} \frac{q^{n}z^{n}(-qex/z, -qdx/z)_{n}}{(xq^2/z, xedq^2)_{n}}=\frac{(dexq,q^2z)_\infty}{(dexq^2,qz)_\infty}
\sum_{n \geq 0} \frac{(dexq)^{n}(-q/d, -q/e)_{n}}{(xq^2/z, q^2z)_{n}}.
\end{align}
Substituting \eqref{fi32} into \eqref{pgeovpair}, we obtain
\begin{align*}	
&\sum_{r, s, t, n \geq 0 \atop m \in \mathbb{Z}} N(r, s,t, m, n) d^{r} e^{s}x^t z^{m} q^{n}\nonumber\\
&=1+\frac{(x+dx)(1+e)q}{(1-xq/z)(1-xedq)}\frac{(dexq,q^2z)_\infty}{(dexq^2,qz)_\infty}
\sum_{n \geq 0} \frac{(dexq)^{n}(-q/d, -q/e)_{n}}{(xq^2/z, q^2z)_{n}}
\nonumber\\&=1+\frac{(x+dx)(1+e)q}{dexq(1+1/d)(1+1/e)}
\sum_{n \geq 0} \frac{(dexq)^{n+1}(-1/d, -1/e)_{n+1}}{(xq/z, qz)_{n+1}}
\nonumber\\&=
\sum_{n \geq 0} \frac{(dexq)^{n}(-1/d, -1/e)_{n}}{(xq/z, qz)_{n}}.
\end{align*}
\end{proof}

Next, we prove the following result which generalizes \cite[Theorem 3]{andrews}.
\begin{proposition}\label{main}
  We have
 \begin{align}\label{thmain}
&\sum_{n \geq 0} \frac{(-1/d,-1/e)_n (xdeq)^{n} }{(zq, xq/z)_n}
\nonumber\\&= 1 - \frac{(-xqd,-xqe)_{\infty}}{(xq,xqde)_{\infty}}\sum_{n \geq 1}\frac{ (xq,-1/d,-1/e)_n q^{n(n+3)/2} (-dex)^n}{(q)_{n-1} (-xdq,-xeq)_n} \left(\frac{1}{q^{n}(1-zq^n)}+\frac{xz^{-1}}{1 - xq^n/z }\right).
\end{align}
\end{proposition}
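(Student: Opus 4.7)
The plan is to build on the preceding lemma, which identifies the LHS as the generating function $\mathcal{N}(d,e,x,z;q)$ for overpartition pairs jointly tracking rank, overlined parts, parts in $\mu$, and total parts. I would extend Andrews' proof of Theorem 3 in \cite{andrews} — which treats the $d=e=1$ specialization corresponding to ordinary partitions and Dyson's rank — by carrying the additional parameters $d$ and $e$ through the argument.

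First, I would apply a basic hypergeometric transformation to the series on the left, most plausibly one of the three-term ${}_3\phi_2$ transformations found in \cite{gr} (in the vicinity of equation (3.27), which was already invoked in the lemma's proof). The aim is to produce an expression in which the only $z$-dependence sits inside denominators of the form $(1-zq^n)$ and $(1-xq^n/z)$, so that the next step can operate on each such factor separately.

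Next, I would carry out a partial fraction decomposition in $z$. The combination $\frac{1}{q^n(1-zq^n)}+\frac{xz^{-1}}{1-xq^n/z}$ appearing on the right is precisely the partial fraction expansion associated with the simple poles at $z=q^{-n}$ and $z=xq^n$, which are exactly the poles of $1/(zq,xq/z)_n$ on the left. Collecting the residues, shifting indices, and regrouping Pochhammer ratios should produce the outer prefactor $\frac{(-xqd,-xqe)_\infty}{(xq,xqde)_\infty}$ together with the inner ratio $\frac{(xq,-1/d,-1/e)_n}{(q)_{n-1}(-xdq,-xeq)_n}$, with the Gaussian factor $q^{n(n+3)/2}$ emerging from telescoped shifts in the Pochhammer symbols. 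The constant $1$ on the RHS should come out as the contribution of the empty overpartition pair (the $n=0$ term of the LHS), with the Hecke-type series accounting for all nonempty pairs.

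The main obstacle is the first step: selecting the transformation from \cite{gr} that exposes the pole structure in $z$ cleanly while keeping the remaining factors in manageable form, since different transformations lead to equivalent but much messier identities. Once the right transformation is in place, the partial fraction step and the Pochhammer bookkeeping are essentially mechanical. As a sanity check, setting $d=e=1$ should collapse the identity to Andrews' Theorem 3 of \cite{andrews}, recovering the partition-rank generating function with a total-parts weight; passing this check is what gives confidence that the general parameters $d,e$ have been threaded through correctly.
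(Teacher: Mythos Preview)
Your outline---apply a hypergeometric transformation to isolate the $z$-poles, then decompose in partial fractions---matches the paper's strategy, but two of the specific steps are wrong, and the second is a genuine gap.

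First, the transformation the paper uses is not a ${}_3\phi_2$ identity near (3.27) of \cite{gr} but a limiting case of Watson's very-well-poised transformation, (2.5.1) of \cite{gr}. With $(a,b,c,d,e)\mapsto(x,x/z,z,-1/d,-1/e)$ the left side becomes the prefactor $\frac{(-xqd,-xqe)_\infty}{(xq,xqde)_\infty}$ times a sum whose $n$th term carries the rational factor $\frac{(1-z)(1-x/z)(1-xq^{2n})}{(1-zq^n)(1-xq^n/z)}$. It is precisely the very-well-poised side that packages everything else as $(xq)_{n-1}(-1/d,-1/e)_n/(q,-xdq,-xeq)_n$ with the quadratic exponent; a ${}_3\phi_2\to{}_3\phi_2$ transformation does not produce this shape.

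Second, and more seriously, the constant $1$ on the right does \emph{not} arise as the $n=0$ term of the left side. The partial fraction identity
\[
\frac{(1-z)(1-x/z)(1-xq^{2n})}{(1-zq^n)(1-xq^n/z)}
= -(1-q^n)(1-xq^n)\left(\frac{1}{q^n(1-zq^n)}+\frac{xz^{-1}}{1-xq^n/z}\right)+\frac{1-xq^{2n}}{q^n}
\]
leaves a $z$-independent remainder $(1-xq^{2n})/q^n$. Summed over $n\ge 0$ against the remaining Pochhammer factors this is a nonterminating very-well-poised ${}_6\phi_5$, which the paper evaluates by (II.20) of \cite{gr} with $(a,b,c,d)\mapsto(x,-1/d,-1/e,\infty)$; the result is exactly $(xq,xqde)_\infty/(-xqd,-xqe)_\infty$, cancelling the prefactor and producing the $1$. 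Your proposal does not account for this remainder or the summation needed to dispatch it, so as written the argument would not close.
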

\begin{proof}
Recall the limiting case of \cite[Eq. (2.5.1)]{gr}:
 \begin{align}
&\sum_{n \geq 0} \frac{(aq/bc,d,e)_n (aq/de)^{n}}{(q,aq/b,aq/c)_n }
\nonumber\\&= \frac{(aq/d,aq/e)_{\infty}}{(aq,aq/de)_{\infty}}\sum_{n \geq 0}\frac{ (a,\sqrt{a}q,-\sqrt{a}q,b,c,d,e)_n
(-a^2q^2)^nq^{n(n-1)/2}}{ (q,\sqrt{a},-\sqrt{a},aq/b,aq/c,aq/d,aq/e)_n(bcde)^n}.
\label{watson}
\end{align}
Replacing $(a,b,c,d,e)$ by $(x,x/z,z,-1/d,-1/e)$ in \eqref{watson}, we have
 \begin{align*}
&\sum_{n \geq 0} \frac{(-1/d,-1/e)_n (xdeq)^{n} }{(zq, xq/z)_n}
\nonumber\\&= \frac{(-xqd,-xqe)_{\infty}}{(xq,xqde)_{\infty}}\sum_{n \geq 0}\frac{ (x,\sqrt{x}q,-\sqrt{x}q,x/z,z,-1/d,-1/e)_n
(-xdeq)^nq^{n(n+1)/2}}{ (q,\sqrt{x},-\sqrt{x},qz,xq/z,-xqd,-xqe)_n}.
\end{align*}
After simplifying, we find that
 \begin{align}\label{pthmain1}
&\sum_{n \geq 0} \frac{(-1/d,-1/e)_n (xdeq)^{n} }{(zq, xq/z)_n}
\nonumber\\&= \frac{(-xqd,-xqe)_{\infty}}{(xq,xqde)_{\infty}}\left\{1+\sum_{n \geq 1}\frac{(xq)_{n-1}\left(-1/d, -1/e\right)_n (-xde)^nq^{n(n+3)/2}(1-z)(1- x/z)(1-xq^{2n})}{(q,-xdq,-xeq)_n(1-zq^n)(1- xq^n/z)} \right\}.
\end{align}
Noting that
 \begin{align*}
&\frac{(1-z)(1-x/z)(1-xq^{2n})}{(1-zq^n)(1-xq^n/z)}
\nonumber\\&=-(1-q^n)(1-xq^n)\left(\frac{1}{q^{n}(1-zq^n)}+\frac{xz^{-1}}{1 - xq^n/z }\right)+\frac{1-xq^{2n}}{q^n},
\end{align*}
we deduce from \eqref{pthmain1} that
 \begin{align}\label{pthmain2}
&\sum_{n \geq 0} \frac{(-1/d,-1/e)_n (xdeq)^{n} }{(zq, xq/z)_n}
\nonumber\\
&= -\frac{(-xqd,-xqe)_{\infty}}{(xq,xqde)_{\infty}}\sum_{n \geq 1}\frac{ \left(xq,-1/d,-1/e \right)_n (-xde)^nq^{n(n+3)/2}}{(q)_{n-1}(-xdq,-xeq)_n} \left(\frac{1}{q^{n}(1-zq^n)}+\frac{xz^{-1}}{1- xq^n/z} \right)\nonumber\\
&\quad + \frac{(-xqd,-xqe)_{\infty}}{(xq,xqde)_{\infty}}\left\{1+\sum_{n \geq 1}\frac{ (xq)_{n-1}\left(-1/d,-1/e \right)_n (-xde)^n q^{n(n+1)/2}(1-xq^{2n})}{(q,-xdq,-xeq)_n} \right\}.
\end{align}
Replacing $(a,b,c,d)$ by $(x,-1/d,-1/e,\infty)$ in \cite[Eq. (II.20), p.356]{gr}, we obtain
 \begin{align*}
1+
\sum_{n \geq 1}\frac{ (xq)_{n-1}\left(-1/d, -1/e \right)_n (-xde)^n q^{n(n+1)/2} (1-xq^{2n})}{(q,-xdq,-xeq)_n} =
\frac{(xq,xqde)_{\infty}}{(-xqd,-xqe)_{\infty}},
\end{align*}
which together with \eqref{pthmain2} gives \eqref{thmain}.
\end{proof}

Let $\overline{NNT}(r,s,b,k,n)$ denote the total number of parts in the overpartition pairs $(\lambda, \mu)$ of $n$ with rank congruent to $b$ modulo $k$, such that
$r$ is the number of overlined parts in $\lambda$ plus the number of non-overlined parts in $\mu$, $s$ is the number of parts in $\mu$. Proceeding as in the proof of \cite[Corollary 4]{andrews}, one can obtain the following generalization of \cite[Corollary 4]{andrews} by applying Proposition \ref{main}.

\begin{corollary}\label{legennntkb}
For $1\leq b\leq k-1$, we have
\begin{align*}
&\sum_{r, s, n \geq 0 }\left (\overline{NNT}(r, s,b, k, n) -\overline{NNT}(r, s,k-b, k, n)\right)d^{r} e^{s} q^{n}
 \nonumber\\
& = - \left. \frac{\partial}{\partial x} \right |_{x=1} \frac{(-xqd,-xqe)_{\infty}}{(xq,xqde)_{\infty}}\sum_{n \geq 1}\frac{ (xq,-1/d,-1/e)_n q^{n(n+3)/2} (-dex)^n}{(q)_{n-1} (-xdq,-xeq)_n}
 \nonumber\\&\quad\cdot \left(\frac{ q^{(b-1)n}-q^{n(k-b-1)}}{1 - q^{kn}}+\frac{x^{k-b} q^{(k-1-b)n}-x^{b} q^{(b-1)n}}{1 - x^k q^{kn}}\right).
\end{align*}
\end{corollary}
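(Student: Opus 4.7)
The plan is to specialize Proposition \ref{main} so as to extract the rank residue classes $b$ and $k-b$ modulo $k$ in the variable $z$, and then apply $\partial/\partial x|_{x=1}$ to pick up the total-parts weighting. Since the preceding Lemma identifies $\mathcal{N}(d,e,x,z;q)$ as $\sum N(r,s,t,m,n)\,d^r e^s x^t z^m q^n$, the desired generating function
\[
\sum_{r,s,n \ge 0} \bigl(\overline{NNT}(r,s,b,k,n) - \overline{NNT}(r,s,k-b,k,n)\bigr) d^r e^s q^n
\]
arises from the right-hand side of Proposition \ref{main} by replacing $\sum_m z^m$ with $\sum_{m \equiv b\,(k)} - \sum_{m \equiv k-b\,(k)}$ at the level of $z$-coefficients, and then differentiating with respect to $x$ at $x = 1$.

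The first step is to project the two rational functions of $z$ inside the $n$-summand. Expanding
\[
\frac{1}{q^n(1-zq^n)} = \sum_{m \ge 0} z^m q^{(m-1)n}, \qquad \frac{xz^{-1}}{1-xq^n/z} = \sum_{m \ge 1} x^m z^{-m} q^{(m-1)n},
\]
one sees that non-negative powers of $z$ come only from the first series and strictly negative powers only from the second. For $1 \le b \le k-1$, summing $z^m$-coefficients over $m \equiv b \pmod{k}$ picks out $m \in \{b, b+k, \ldots\}$ from the first series and $m \in \{-(k-b), -(2k-b), \ldots\}$ from the second, giving
\[
\frac{q^{(b-1)n}}{1-q^{kn}} + \frac{x^{k-b} q^{(k-b-1)n}}{1-x^k q^{kn}}.
\]
Swapping $b \leftrightarrow k-b$ yields the corresponding sum over the residue $k-b$, and subtracting produces exactly the bracketed factor of the corollary. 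The standalone ``$1$'' on the right-hand side of Proposition \ref{main} contributes only to $z^0$ and is annihilated by this projection, while the overall minus sign is inherited from the ``$1-\cdots$'' structure of Proposition \ref{main}.

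Finally, $\partial/\partial x|_{x=1}$ is a linear operation that commutes with the $z$-projection and converts the $x^t$ weighting into a weight equal to the total number of parts $t$, giving the asserted formula. I do not anticipate a substantive obstacle: the derivation is a direct specialization of Proposition \ref{main} modelled on the proof of \cite[Corollary 4]{andrews}. The only care needed is the bookkeeping that separates contributions of positive and negative $z$-powers, which is precisely what produces the asymmetry in $x$ between the two halves of the bracketed factor.
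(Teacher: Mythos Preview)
Your proposal is correct and follows exactly the approach the paper indicates: project the $z$-dependent rational functions in Proposition~\ref{main} onto the residue classes $b$ and $k-b$ modulo $k$, subtract, and then apply $\partial/\partial x|_{x=1}$, just as in the proof of \cite[Corollary~4]{andrews}. The bookkeeping you carried out (separating non-negative and negative powers of $z$ and noting that the constant term ``$1$'' vanishes under the projection since $1\le b\le k-1$) is precisely what is needed, and the paper itself omits these details.
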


Finally, we prove a key Lemma required in the proof of Theorem \ref{main2}.

\begin{lemma} \label{l42} We have
  \begin{align*}  \nonumber
\frac{[q^3;q^9]_\infty^3 (q^9;q^9)_\infty^2}{[-q^3;q^9]_\infty^2(-q^9;q^9)_\infty^2}
& =
2\sum_{n \in \mathbb{Z}} \frac{(-1)^n q^{9n^2+6n}}{1+q^{9n}}
-2\sum_{n \in \mathbb{Z}} \frac{(-1)^n q^{9n^2+12n+3}}{1+q^{9n+3}} \\
&+4\frac{(-q^9;q^9)_\infty^2} {[-q^{3};q^9]_\infty}
\sum_{n \in \mathbb{Z}} \frac{(-1)^n q^{9n^2+18n+9}}{1+q^{9n+6}} .
\end{align*}
\end{lemma}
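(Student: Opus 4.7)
The plan is to derive this identity by specializing Proposition \ref{main} at appropriate values and then carefully dissecting the resulting Lambert-type series modulo $3$. As a first step, using the elementary identities $(q^3, q^6, q^9; q^9)_\infty = (q^3;q^3)_\infty$ and $(-q^3, -q^6, -q^9; q^9)_\infty = (-q^3;q^3)_\infty$, one can check that the left-hand side of Lemma \ref{l42} simplifies to the eta-quotient
\begin{align*}
\frac{[q^3;q^9]_\infty^3 (q^9;q^9)_\infty^2}{[-q^3;q^9]_\infty^2(-q^9;q^9)_\infty^2} = \frac{(q^3;q^3)_\infty^5}{(q^6;q^6)_\infty^2\,(q^9;q^9)_\infty}.
\end{align*}
This form suggests that the target identity will arise from a cube-root-of-unity specialization of \eqref{thmain}.

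Concretely, I would apply Proposition \ref{main} with $d = e = 1$, $x = 1$, and $q$ replaced by $q^3$, and then set $z = \omega$ where $\omega = e^{2\pi i/3}$. Since $(\omega q^3, \omega^2 q^3; q^3)_n = (q^9;q^9)_n / (q^3;q^3)_n$, the left-hand side of \eqref{thmain} becomes a basic hypergeometric series in base $q^3$ which should evaluate, via the Jacobi triple product, to the eta-quotient displayed above. On the right-hand side, the rational factor $\tfrac{1}{q^{3n}(1-\omega q^{3n})} + \tfrac{\omega^{-1}}{1 - q^{3n}/\omega}$ simplifies, using $\omega^3 = 1$, to $\tfrac{(1-q^{3n})^2(1+q^{3n})}{q^{3n}(1-q^{9n})}$. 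Combining this with the explicit forms of $(-1,-1;q^3)_n$ and $(-q^3,-q^3;q^3)_n$ yields a Hecke-type sum in base $q^3$; splitting the summation index $n \geq 1$ into the three residue classes $n \equiv 0, 1, 2 \pmod{3}$ and then symmetrizing each subsum over $n \in \mathbb{Z}$ via $n \mapsto -n$ (with the sign $(-1)^n$) should produce the three Lambert sums with denominators $1+q^{9n}$, $1+q^{9n+3}$, $1+q^{9n+6}$ on the right-hand side of Lemma \ref{l42}.

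The main obstacle will be the delicate bookkeeping of exponents and the asymmetric coefficient $4\frac{(-q^9;q^9)_\infty^2}{[-q^3;q^9]_\infty}$ in front of the third Lambert sum. This asymmetry strongly suggests that one residue class (likely $n \equiv 2 \pmod{3}$) produces, after dissection, a subsum that can be evaluated in closed form via a Jacobi triple product identity, yielding a theta-quotient that gets absorbed into the coefficient of the remaining sum. Matching the exponent $q^{3n(n+1)/2}$ emerging from Proposition \ref{main} with the target exponents $9n^2+6n$, $9n^2+12n+3$, $9n^2+18n+9$ after the index shifts in each residue class is the most tedious step. If the $z = \omega$ specialization does not yield the form of Lemma \ref{l42} directly, a natural fallback is to apply the Watson ${}_8\phi_7$ transformation \cite[Eq.\ (2.5.1)]{gr} used in the proof of Proposition \ref{main} with a different parameter choice to produce a more symmetric Hecke-type sum, and then perform the same residue-class dissection.
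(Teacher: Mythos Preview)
Your plan has a genuine gap: the specialization of Proposition~\ref{main} you propose does not produce the structure of Lemma~\ref{l42}. After setting $d=e=x=1$, $q\mapsto q^3$, $z=\omega$ and simplifying the rational factor as you indicate, the right-hand side of \eqref{thmain} becomes (up to the infinite-product prefactor)
\[
\sum_{n\ge 1}\frac{4(-1)^n(1-q^{3n})^2\,q^{3n(n+1)/2}}{(1+q^{3n})(1+q^{3n}+q^{6n})}.
\]
This is a \emph{unilateral} sum with quadratic exponent $3n(n+1)/2$. Dissecting $n\bmod 3$ gives leading exponents $(27m^2+9m)/2$, $(27m^2+27m+6)/2$, $(27m^2+45m+18)/2$ in the three residue classes, none of which match the exponents $9m^2+6m$, $9m^2+12m+3$, $9m^2+18m+9$ in the bilateral Appell--Lerch sums of Lemma~\ref{l42}. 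Equally problematic, the denominators $(1+q^{3n})(1+q^{3n}+q^{6n})$ do not collapse to the simple $1+q^{9m+b}$ form after the dissection. On the other side, the left-hand side of \eqref{thmain} under this specialization is the full overpartition-\emph{pair} rank generating function evaluated at a primitive cube root of unity, and there is no Jacobi-triple-product summation that collapses it to the target eta-quotient; your claim that it ``should evaluate'' to $(q^3;q^3)_\infty^5/\bigl((q^6;q^6)_\infty^2(q^9;q^9)_\infty\bigr)$ is unsupported. The underlying issue is structural: Proposition~\ref{main} is a Watson--Whipple transformation producing Hecke-type single sums, whereas Lemma~\ref{l42} is a partial-fraction identity whose natural output is a sum of bilateral Appell--Lerch series.

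The paper's proof is entirely different and much shorter. It invokes the $r=1$, $s=3$ case of the generalized Lambert series identity \cite[Theorem~2.1]{gen},
\[
\frac{[a]_\infty(q)_\infty^2}{[b_1,b_2,b_3]_\infty}
=\sum_{j=1}^{3}\frac{[a/b_j]_\infty}{\prod_{i\ne j}[b_i/b_j]_\infty}
\sum_{n\in\mathbb{Z}}\frac{q^{n(n+1)}}{1-b_jq^n}\Bigl(\frac{ab_j}{\prod_{i\ne j}b_i}\Bigr)^{\!n},
\]
replaces $q$ by $q^9$, sets $(a,b_1,b_2,b_3)=(q^6,-1,-q^3,-q^6)$, and then multiplies through by $2[q^3;q^9]_\infty^2/[-q^3;q^9]_\infty$. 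This immediately produces the three bilateral sums with denominators $1+q^{9n}$, $1+q^{9n+3}$, $1+q^{9n+6}$ and the correct theta-quotient coefficients, including the asymmetric factor $4(-q^9;q^9)_\infty^2/[-q^3;q^9]_\infty$ in front of the third sum, which in this approach comes for free from the prefactor $[a/b_3]_\infty/\bigl([b_1/b_3]_\infty[b_2/b_3]_\infty\bigr)$ rather than from any further manipulation.
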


\begin{proof}
Setting  $r=1$ and $s=3$ in \cite[Theorem 2.1]{gen}, we see that
\begin{align*}  \nonumber
\frac{[a]_\infty (q)_\infty^2}{[b_1, b_2, b_3]_\infty}
& =
 \frac{[a/b_1]_\infty} {[b_2/b_1, b_3/b_1 ]_\infty}
\sum_{n \in \mathbb{Z}} \frac{q^{n(n+1)}}{1-b_1q^n} \left( \frac{a b_1 }{b_2  b_3} \right)^n\\
&+ \frac{[a/b_2]_\infty} {[b_1/b_2, b_3/b_2 ]_\infty}
\sum_{n \in \mathbb{Z}} \frac{q^{n(n+1)}}{1-b_2q^n} \left( \frac{a b_2}{b_1  b_3} \right)^n\\
& + \frac{[a/b_3]_\infty} {[b_1/b_3, b_2/b_3 ]_\infty}
\sum_{n \in \mathbb{Z}} \frac{q^{n(n+1)}}{1-b_3q^n} \left( \frac{a b_3 }{b_1  b_2} \right)^n.
\end{align*}
Replacing $q$ by $q^9$ and setting $(a, b_1, b_2, b_3)=(q^6, -1, -q^3,-q^6)$, we find that
\begin{align*}  \nonumber
\frac{[q^6;q^9]_\infty (q^9;q^9)_\infty^2}{[-1, -q^3, -q^6;q^9]_\infty}
& = \frac{[-q^6;q^9]_\infty} {[q^3,q^6;q^9 ]_\infty}
\sum_{n \in \mathbb{Z}} \frac{(-1)^n q^{9n^2+6n}}{1+q^{9n}} \\
&+ \frac{[-q^3;q^9]_\infty} {[q^{-3}, q^3;q^9 ]_\infty}
\sum_{n \in \mathbb{Z}} \frac{(-1)^n q^{9n^2+12n}}{1+q^{9n+3}} \\
&+\frac{[-1;q^9]_\infty} {[q^{-3},q^{-6};q^9]_\infty}
\sum_{n \in \mathbb{Z}} \frac{(-1)^n q^{9n^2+18n}}{1+q^{9n+6}} .
\end{align*}
Multiplying both sides by $2[q^3;q^9]_\infty^2/[-q^3;q^9]_\infty$ and simplifying completes the proof.
\end{proof}

\section{Proofs of Theorems \ref{main1}--\ref{main3}}

We are now in a position to prove Theorems \ref{main1}--\ref{main3}.

\begin{proof}[Proof of Theorem \ref{main1}]
We first consider the following special case of \eqref{genovpair}:
\begin{align*}	
\mathcal{N}(1, 1/q, x, z ; q^2)=\sum_{n \geq 0} \frac{(-1,-q;q^2)_n (xq)^{n} }{(zq,xq/z;q^2)_n}.
\end{align*}
By \cite[Theorem 1.2]{Lo2}, the coefficient of $x^tz^bq^n$ in
\begin{align*}
\sum_{n \geq 0} \frac{(-1,-q;q^2)_n (xq)^{n} }{(zq,xq/z;q^2)_n}
\end{align*}
is equal to the number of overpartitions of $n$ with $t$ parts and $M_2$-rank $b$. By Corollary \ref{legennntkb}, we have for $1\leq b\leq k-1$
\begin{align}\label{genovm2nt}
&\sum_{n \geq 0} \left(\overline{NT2}(b,k,n)-\overline{NT2}(k-b,k,n)\right) q^n
 \nonumber\\
 &= - \left. \frac{\partial}{\partial x} \right |_{x=1} \frac{(-xq)_{\infty}}{(xq)_{\infty}}\sum_{n \geq 1}\frac{ (xq^2,-1,-q;q^2)_n q^{n(n+2)} (-x)^n}{(q^2;q^2)_{n-1} (-xq^2,-xq;q^2)_n}
 \nonumber\\
&\quad\cdot \left(\frac{ q^{2(b-1)n}-q^{2(k-b-1)n}}{1 - q^{2kn}}+\frac{x^{k-b} q^{2(k-1-b)n}-x^{b} q^{2(b-1)n}}{1 - x^k q^{2kn}}\right).
\end{align}
Using \eqref{genovm2nt} with $b=1$ and $k=5$, we obtain
\begin{align}
&\sum_{n \geq 0} \left(\overline{NT2}(1,5,n)-\overline{NT2}(4,5,n)\right) q^n
 \nonumber\\
 &= -\left. \frac{\partial}{\partial x} \right |_{x=1} \frac{(-xq)_{\infty}}{(xq)_{\infty}}\sum_{n \geq 1}\frac{ (xq^2,-1,-q;q^2)_n q^{n(n+2)} (-x)^n}{(q^2;q^2)_{n-1} (-xq^2,-xq;q^2)_n}\left(\frac{ 1-q^{6n}}{1 - q^{10n}}+\frac{x^{4} q^{6n}-x}{1 - x^5 q^{10n}}\right) \nonumber\\
 &= - \left. \frac{\partial}{\partial x} \right |_{x=1} \frac{(-xq)_{\infty}}{(xq)_{\infty}}\sum_{n \geq 1}\frac{ (xq^2,-1,-q;q^2)_n q^{n(n+2)} (-x)^n}{(q^2;q^2)_{n-1} (-xq^2,-xq;q^2)_n}
 \nonumber\\
 &\quad\cdot \frac{(x - 1) (q^{2n} - 1) (x q^{2n} - 1) (x q^{4 n} - 1)}{(1 - q^{10n})(1 - x^5 q^{10n})} \nonumber\\&\quad
 \cdot\{1 + q^{2n} + q^{4 n} + q^{2n} x + 2 q^{4 n} x + q^{6 n} x + q^{4n} x^2 + q^{6 n} x^2 + q^{8 n} x^2\}. \nonumber
\end{align}
Noting that
\begin{align}
\left.\frac{\partial}{\partial x}\right|_{x=1}(1-x) F(x, q, z)=-F(1, q, z),\label{partial}
\end{align}
we find
\begin{align}
&\sum_{n \geq 0} \left(\overline{NT2}(1,5,n)-\overline{NT2}(4,5,n)\right) q^n
 \nonumber\\
&=  \frac{2(-q)_{\infty}}{(q)_{\infty}}\sum_{n \geq 1}
 \frac{ (-1)^{n}q^{n(n+2)} (q^{2n} - 1)^3 ( q^{4 n} - 1)}{(1+q^{2n})(1 - q^{10n})^2} \{1 + 2q^{2n} + 4q^{4 n} +  2q^{6 n}  +  q^{8 n} \}.\label{conm2ovmod51}
\end{align}
Similarly, one can prove that
\begin{align}
&\sum_{n \geq 0} \left(\overline{NT2}(2,5,n)-\overline{NT2}(3,5,n)\right) q^n
 \nonumber\\&=  \frac{2(-q)_{\infty}}{(q)_{\infty}}\sum_{n \geq 1}
 \frac{(-1)^{n} q^{n(n+2)} (q^{2n} - 1)^3 ( q^{4 n} - 1)}{(1+q^{2n})(1 - q^{10n})^2} \{2q^{2n} + q^{4 n} +  2q^{6 n}   \}.\label{conm2ovmod52}
\end{align}
Equations \eqref{conm2ovmod51} and \eqref{conm2ovmod52} give
\begin{align}
&\sum_{n \geq 0} \left(\overline{NT2}(1,5,n)-\overline{NT2}(4,5,n)+2\overline{NT2}(2,5,n)-2\overline{NT2}(3,5,n)\right) q^n
 \nonumber\\
 &=  \frac{2(-q)_{\infty}}{(q)_{\infty}}\sum_{n \geq 1}
 \frac{(-1)^{n} q^{n(n+2)} (q^{2n} - 1)^3 ( q^{4 n} - 1)}{(1+q^{2n})(1 - q^{10n})^2} \{1 + 6q^{2n} + 6q^{4 n} +  6q^{6 n}  +  q^{8 n}  \} \nonumber\\
 &\equiv  \frac{2(-q)_{\infty}}{(q)_{\infty}}\sum_{n \geq 1}
 \frac{ (-1)^{n}q^{n(n+2)} (q^{2n} - 1)^2 ( 1-q^{4 n} )}{(1+q^{2n})(1 - q^{10n})}\pmod{5}
  \nonumber\\
  &=  \frac{2(-q)_{\infty}}{(q)_{\infty}}\sum_{n \geq 1}
 \frac{ (-1)^{n}q^{n(n+2)} (1-q^{2n} )^3}{1 - q^{10n}}
    \nonumber\\&=  \frac{2(-q)_{\infty}}{(q)_{\infty}}\sum_{{n \in \mathbb{Z} \atop n\neq0}}
 \frac{ (-1)^{n}q^{n(n+2)} (1-3q^{2n} )}{1 - q^{10n}}
 \nonumber\\&=  \frac{2(-q)_{\infty}}{(q)_{\infty}}\left(\bar{S}_{2}(1)+3\bar{S}_{2}(3)\right), \label{ovs131}
 \intertext{where}
 \bar{S}_{2}(b):&=\sum_{{n \in \mathbb{Z} \atop n \neq0}} \frac{(-1)^{n} q^{n^{2}+2 b n}}{1-q^{10 n}}. \nonumber
\end{align}
By \cite[Eq. (4.9)]{Lo-Os2}, we have
\begin{equation}\label{ovs132}
\sum_{n \geq 0} \Bigl \{ \overline{N}_{2}(1,5, n)-\overline{N}_{2}(2,5, n) \Bigr \} q^{n} \frac{(q)_{\infty}}{2(-q)_{\infty}}=-\bar{S}_{2}(1)-3 \bar{S}_{2}(3),
\end{equation}
where $\overline{N}_{2}(b,k, n)$ denotes the number of overpartitions of $n$ whose $M_2$-rank is congruent to $b$ modulo $k$.
Equations \eqref{ovs131} and \eqref{ovs132} imply
\begin{align*}
&\sum_{n \geq 0} \left(\overline{NT2}(1,5,n)-\overline{NT2}(4,5,n)+2\overline{NT2}(2,5,n)-2\overline{NT2}(3,5,n)\right) q^n
\nonumber\\&\equiv\sum_{n \geq 0} \left\{\overline{N}_{2}(2,5, n)-\overline{N}_{2}(1,5, n)\right\} q^{n} \pmod{5},
\end{align*}
which together with \cite[Eq. (1.10)]{Lo-Os2} yields (\ref{coneqm2o5}).
\end{proof}

\begin{proof}[Proof of Theorem \ref{main2}]
Setting $e=0$ and $d=1$ in \eqref{genovpair}, we obtain
\begin{align*}
\mathcal{N}(1, 0, x, z ; q^2)=\sum_{n \geq 0} \frac{(-1)_n x^nq^{n(n+1)/2} }{(zq,xq/z)_n}.
\end{align*}
By \cite[Proposition 1.1]{Lo1}, the coefficient of $x^tz^bq^n$ in
\begin{align*}
\sum_{n \geq 0} \frac{(-1)_n x^nq^{n(n+1)/2} }{(zq,xq/z)_n}
\end{align*}
is equal to the number of overpartitions of $n$ with $t$ parts and rank $b$. By Corollary \ref{legennntkb}, we have
for $1\leq b\leq k-1$
\begin{align}\label{genovdrnt}
&\sum_{n \geq 0} \left(\overline{NT}(b,k,n)-\overline{NT}(k-b,k,n)\right) q^n
 \nonumber\\
&= - \left. \frac{\partial}{\partial x} \right |_{x=1} \frac{(-xq)_{\infty}}{(xq)_{\infty}}\sum_{n \geq 1}\frac{ (xq,-1;q)_n q^{n(n+1)} (-x)^n}{(q;q)_{n-1} (-xq;q)_n}
 \nonumber\\&\quad\cdot \left(\frac{ q^{(b-1)n}-q^{(k-b-1)n}}{1 - q^{kn}}+\frac{x^{k-b} q^{(k-1-b)n}-x^{b} q^{(b-1)n}}{1 - x^k q^{kn}}\right).
\end{align}
Proceeding as in the proof of Theorem \ref{main1},
after applying \eqref{genovm2nt}, \eqref{partial} and \eqref{genovdrnt}, we obtain
\begin{align*}
\sum_{n \geq 0} \left(\overline{NT}(1,3,n)-\overline{NT}(2,3,n)\right) q^n &=  \frac{2(-q)_{\infty}}{(q)_{\infty}}\sum_{n \geq 1}
 \frac{ (-1)^{n}q^{n^2+n} (q^{n} - 1)^4}{(1 - q^{3n})^2}
 \nonumber\\&\equiv  \frac{2(-q)_{\infty}}{(q)_{\infty}}\sum_{n \geq 1}
 \frac{ (-1)^{n}q^{n^2+n} ( 1-q^n)}{1 - q^{3n}}\pmod{3}
\intertext{and}
\sum_{n \geq 0} \left(\overline{NT2}(1,3,n)-\overline{NT2}(2,3,n)\right) q^n &=  \frac{2(-q)_{\infty}}{(q)_{\infty}}\sum_{n \geq 1}
 \frac{ (-1)^{n}q^{n^2+2n} (q^{2n} - 1)^4 }{(1 - q^{6n})^2}  \nonumber\\
 &\equiv  \frac{2(-q)_{\infty}}{(q)_{\infty}}\sum_{n \geq 1}
 \frac{ (-1)^{n}q^{n^2+2n} ( 1-q^{2n})}{1 - q^{6n}}\pmod{3}.
\end{align*}
Thus, we have
\begin{align}
 &\sum_{n \geq 0} \left(\overline{NT}(1,3,n)-\overline{NT}(2,3,n)-\overline{NT2}(1,3,n)+\overline{NT2}(2,3,n)\right) q^n
 \nonumber\\&\equiv   \frac{2(-q)_{\infty}}{(q)_{\infty}}\sum_{n \geq 1}
 \frac{ (-1)^{n}q^{n^2+n} ( 1-2q^{n}+2q^{3n}-q^{4n})}{1 - q^{6n}}
  \nonumber\\&\equiv   \frac{2(-q)_{\infty}}{(q)_{\infty}}\sum_{n \geq 1}
 \frac{ (-1)^{n}q^{n^2+n} ( 1+q^{n})}{1 + q^{3n}}\pmod{3}.  \label{dis1}
\end{align}
Now, from \cite[Eq. (2.1)]{ramphi}, we have
\begin{align} \nonumber
\frac{(-q)_\infty}{(q)_\infty}
&=
\frac{(q^{18};q^{18})_\infty^3}{[q^3;q^{18}]_\infty^8(q^6;q^6)_\infty^4[q^9;q^{18}]_\infty}
\left(1+ 2q\frac{[q^3;q^{18}]_\infty}{[q^9;q^{18}]_\infty}+4q^2\frac{[q^3;q^{18}]_\infty^2}{[q^9;q^{18}]_\infty^2}\right)\\
&= \frac{(q^{18};q^{18})_\infty^3}{[q^3;q^{18}]_\infty^8(q^6;q^6)_\infty^4[q^9;q^{18}]_\infty}
\left(1+ 2q\frac{(-q^9;q^9)_\infty^2}{[-q^3;q^{9}]_\infty}+4q^2\frac{(-q^9;q^9)_\infty^4}{[-q^3;q^9]_\infty^2}\right).
\label{dis2}
\end{align}
Next, note that
\begin{align} \nonumber
  &\sum_{n \geq 1}
 \frac{ (-1)^{n}q^{n^2+n} ( 1+q^{n})}{1 + q^{3n}}
 = -\frac{1}{2}+ \sum_{n \in \mathbb{Z}}
 \frac{ (-1)^{n}q^{n^2+n} }{1 + q^{3n}} \\
 &\quad= -\frac{1}{2}
 +
  \sum_{n \in \mathbb{Z}} \frac{ (-1)^{n}q^{9n^2+6n} }{1 + q^{9n}}
 -
 \sum_{n \in \mathbb{Z}} \frac{ (-1)^{n}q^{9n^2+12n+3} }{1 + q^{9n+3}}
 +
  \sum_{n \in \mathbb{Z}} \frac{ (-1)^{n}q^{9n^2+18n+8} }{1 + q^{9n+6}}. \label{dis3}
\end{align}
Invoking \eqref{dis2} and \eqref{dis3} into \eqref{dis1} and collecting only terms where the power $q$ is divisible by 3 yields
\begin{align*}
&\frac{(q^{18};q^{18})_\infty^3}{[q^3;q^{18}]_\infty^8(q^6;q^6)_\infty^4[q^9;q^{18}]_\infty}
\Bigg(-1
 +
2  \sum_{n \in \mathbb{Z}} \frac{ (-1)^{n}q^{9n^2+6n} }{1 + q^{9n}}
 \\
 &\quad-
 2\sum_{n \in \mathbb{Z}} \frac{ (-1)^{n}q^{9n^2+12n+3} }{1 + q^{9n+3}}
 +
 4\frac{(-q^9;q^9)_\infty^2}{[-q^3;q^{9}]_\infty}
  \sum_{n \in \mathbb{Z}} \frac{ (-1)^{n}q^{9n^2+18n+9} }{1 + q^{9n+6}}\Bigg).
\end{align*}
Applying Lemma \ref{l42}, the expression in parenthesis then becomes
\begin{align*}
-1+ \frac{[q^3;q^9]_\infty^3 (q^9;q^9)_\infty^2}{[-q^3;q^9]_\infty^3(-q^9;q^9)_\infty^2}
&=-1+\frac{(q^3;q^3)_\infty^3 (-q^9;q^9)_\infty}{(-q^3;q^3)_\infty^3(q^9;q^9)_\infty}\\
&\equiv -1+1 = 0 \pmod{3}.
\end{align*}
This shows that the coefficients of $q^{3n}$ in \eqref{dis1} are divisible by 3, which then implies \eqref{coneqov30}.
Similarly, congruence \eqref{coneqov31} can be proved in exactly the same way, by showing that the coefficients of $q^{3n+1}$ in \eqref{dis1} are also divisible by 3.
\end{proof}

\begin{proof}[Proof of Theorem \ref{main3}]
Replacing $(q, d, e)$ by $(q^2, 0, 1/q)$ in \eqref{genovpair}, we obtain
\begin{align*}
\mathcal{N}(0, 1/q, x, z ; q^2)=\sum_{n \geq 0} \frac{(-q;q^2)_n x^nq^{n^2} }{(zq,xq/z;q^2)_n}.
\end{align*}
Noting that partitions without repeated odd parts correspond to overpartitions in which the odd parts are all overlined and even parts are all non-overlined, we deduce from \cite[Theorem 1.2]{Lo2} that the coefficient of $x^tz^bq^n$ in
\begin{align*}
&\sum_{n \geq 0} \frac{(-q;q^2)_n x^nq^{n^2} }{(zq,xq/z;q^2)_n}
\end{align*}
is equal to the number of partitions without repeated odd parts of $n$ with $t$ parts and $M_2$-rank
$b$. By Corollary \ref{legennntkb}, we have for $1\leq b\leq k-1$
\begin{align}\label{genm2nt}
&\sum_{n \geq 0} \left(NT2(b,k,n)-NT2(k-b,k,n)\right) q^n
 \nonumber\\&= - \left. \frac{\partial}{\partial x} \right |_{x=1} \frac{(-xq;q^2)_{\infty}}{(xq^2;q^2)_{\infty}}\sum_{n \geq 1}\frac{ (xq^2,-q;q^2)_n q^{2n^2+n} (-x)^n}{(q^2;q^2)_{n-1} (-xq;q^2)_n}
 \nonumber\\&\quad\cdot \left(\frac{ q^{2(b-1)n}-q^{2(k-b-1)n}}{1 - q^{2nk}}+\frac{x^{k-b} q^{2(k-1-b)n}-x^{b} q^{2(b-1)n}}{1 - x^k q^{2kn}}\right).
\end{align}
Again, proceeding as in the proof of Theorem \ref{main1},
after applying \eqref{partial} and \eqref{genm2nt}, we obtain
\begin{align}
&\sum_{n \geq 0} \left(NT2(1,5,n)-NT2(4,5,n)\right) q^n
 \nonumber\\&=  \frac{(-q;q^2)_{\infty}}{(q^2;q^2)_{\infty}}\sum_{n \geq 1}
 \frac{ (-1)^{n}q^{2n^2+n} (q^{2n} - 1)^3 ( q^{4 n} - 1)}{(1 - q^{10n})^2} \{1 + 2q^{2n} + 4q^{4 n} +  2q^{6 n}  +  q^{8 n} \}\label{conm2mod51}
\intertext{and}
&\sum_{n \geq 0} \left(NT2(2,5,n)-NT2(3,5,n)\right) q^n
 \nonumber\\&= \frac{(-q;q^2)_{\infty}}{(q^2;q^2)_{\infty}}\sum_{n \geq 1}
 \frac{(-1)^{n} q^{2n^2+n} (q^{2n} - 1)^3 ( q^{4 n} - 1)}{(1 - q^{10n})^2} \{2q^{2n} + q^{4 n} +  2q^{6 n}   \}.\label{conm2mod52}
\end{align}
Equations \eqref{conm2mod51} and \eqref{conm2mod52} give
\begin{align}
&\sum_{n \geq 0} \left(NT2(1,5,n)-NT2(4,5,n)+2NT2(2,5,n)-2NT2(3,5,n)\right) q^n \nonumber \\
& =  \frac{(-q;q^2)_{\infty}}{(q^2;q^2)_{\infty}}\sum_{n \geq 1}
 \frac{(-1)^{n} q^{2n^2+n} (q^{2n} - 1)^3 ( q^{4 n} - 1)}{(1 - q^{10n})^2} \{1 + 6q^{2n} + 6q^{4 n} +  6q^{6 n}  +  q^{8 n}  \}
 \nonumber\\
 &\equiv  \frac{(-q;q^2)_{\infty}}{(q^2;q^2)_{\infty}}\sum_{n \geq 1}
 \frac{ (-1)^{n}q^{2n^2+n} (q^{2n} - 1)^2 ( 1-q^{4 n} )}{(1 - q^{10n})}\pmod{5}
  \nonumber\\&=   \frac{(-q;q^2)_{\infty}}{(q^2;q^2)_{\infty}}\sum_{n \geq 1}
 \frac{ (-1)^{n}q^{2n^2+n} (1-2q^{2n}+2q^{6n}-q^{8n} )}{1 - q^{10n}}
  \nonumber\\
&=   \frac{(-q;q^2)_{\infty}}{(q^2;q^2)_{\infty}}\sum_{{n \in \mathbb{Z} \atop n\neq0}}
 \frac{ (-1)^{n}q^{2n^2+n} (1-2q^{2n} )}{1 - q^{10n}}
   \nonumber\\
&=   \frac{(-q;q^2)_{\infty}}{(q^2;q^2)_{\infty}}\left(S_{2}(1)-2S_{2}(3)\right),\label{m2s131}
 \intertext{where}
 S_{2}(b):&=\sum_{{ n \in \mathbb{Z} \atop n\neq0}} \frac{(-1)^{n} q^{2n^{2}+b n}}{1-q^{10 n}}. \nonumber
\end{align}
By \cite[Eq. (5.7)]{Lo-Os1}, we have
\begin{equation}\label{m2s132}
\sum_{n \geq 0} \left\{N_{2}(1,5, n)-N_{2}(2,5, n)\right\} q^{n} \frac{(q^2;q^2)_{\infty}}{(-q;q^2)_{\infty}}=-S_{2}(1)+2S_{2}(3),
\end{equation}
where $N_{2}(b,k, n)$ denotes the number of partitions without repeated odd parts of $n$ whose $M_2$-rank is congruent to $b$ modulo $k$.
Equations \eqref{m2s131} and \eqref{m2s132} imply
\begin{align*}
&\sum_{n \geq 0} \left(NT2(1,5,n)- NT2(4,5,n)+ 2NT2(2,5,n)-2 NT2(3,5,n)\right) q^n
\nonumber\\&\equiv\sum_{n \geq 0} \left\{N_{2}(2,5, n)- N_{2}(1,5, n)\right\} q^{n} \pmod{5},
\end{align*}
which together with \cite[Eq. (1.7)]{Lo-Os1} gives (\ref{coneqm25}).
\end{proof}

\section{Concluding Remarks}

There are several directions for future study. First, Dyson also conjectured in \cite{dyson} the existence of a partition statistic called the crank which would combinatorially explain Ramanujan's congruences for the partition function modulo $5$, $7$ and $11$. In \cite{andgar}, this statistic was defined and Dyson's conjecture was proven. The crank of a partition is either the largest part, if 1 does not occur, or the difference between the number of parts larger than the number of $1$'s and the number of $1$'s, if $1$ does occur. Let $M_{\omega}(m,k,n)$ denote the number of ones in the partitions of $n$ with crank congruent to $m$ modulo $k$. It appears that there are further congruences and relations for $NT(m,k,n)$ and $M_{\omega}(m,k,n)$. For example, using the techniques from \cite{andrews}, one can prove that for $i=1$, $3$, $4$, $5$,

\begin{align}
&NT(1,7,7n+i)-NT(6,7,7n+i)+2NT(3,7,7n+i)-2NT(4,7,7n+i) \equiv0\pmod{7} \label{newNT1} \\
\intertext{and for $i=0,1,5$,} \nonumber \\
&NT(2,7,7n+i)-NT(5,7,7n+i)+4NT(3,7,7n+i)-4NT(4,7,7n+i) \equiv0\pmod{7}. \label{newNT2} 
\end{align}
The details are left to the interested reader. We note that for $i=1$, $5$, (\ref{newNT1}) and (\ref{newNT2}) follow from \cite[Theorem 2]{andrews} and \cite[Corollary 1.4]{chern1}. In addition, we make the following

\begin{conjecture} For all $n \in \mathbb{N}$, we have
\begin{align}
&\sum_{n \geq 0}(NT(1,7,7n+5)-NT(6,7,7n+5)+3NT(2,7,7n+5)-3NT(5,7,7n+5))q^n\nonumber \\
&\qquad \qquad \qquad \qquad \qquad \qquad \qquad \qquad \qquad \qquad \qquad \qquad \qquad = \frac{-7(q^7;q^7)_{\infty}^3(q^3,q^4;q^7)_{\infty}}{(q,q^6;q^7)_{\infty} (q^2, q^5; q^7)^2_{\infty}},  \label{id7125}\\
&\sum_{n \geq 0}(NT(1,7,7n+4)-NT(6,7,7n+4)+2NT(3,7,7n+4)-2NT(4,7,7n+4))q^n\nonumber \\
&\qquad \qquad \qquad \qquad \qquad \qquad \qquad \qquad \qquad \qquad \qquad \qquad \qquad =\frac{-7(q^7;q^7)_{\infty}^3(q^3,q^4;q^7)^2_{\infty}}{(q,q^6;q^7)_{\infty} (q^2, q^5; q^7)^3_{\infty}},  \label{id7135} \\
& {} \nonumber \\
& NT(1,11,11n+6)-NT(10,11,11n+6) + 3NT(2,11,11n+6)-3NT(9,11,11n+6)\nonumber\\
&-4NT(3,11,11n+6)+4NT(8,11,11n+6) +3NT(4,11,11n+6)-3NT(7,11,11n+6) \nonumber\\
& +3NT(5,11,11n+6)-3NT(6,11,11n+6)\equiv0\pmod{11},\label{con116}\\
& {} \nonumber \\
&NT(1,11,11n+1)-NT(10,11,11n+1)- 3NT(2,11,11n+1)+3NT(9,11,11n+1)\nonumber\\
&+5NT(3,11,11n+1)-5NT(8,11,11n+1) -2NT(4,11,11n+1)+2NT(7,11,11n+1)\nonumber\\
&+4NT(5,11,11n+1)-4NT(6,11,11n+1)\equiv0\pmod{11}, \label{con111} \\
& {} \nonumber \\
&NT(1,13,13n+1)-NT(12,13,13n+1)+NT(2,13,13n+1)-NT(11,13,13n+1)\nonumber\\
&+6NT(3,13,13n+1)-6NT(10,13,13n+1) +3NT(6,13,13n+1)-3NT(7,13,13n+1)\nonumber\\
&\equiv0\pmod{13}, \label{con131} \\
& {} \nonumber \\
&NT(1,13,13n+3)-NT(12,13,13n+3)+3NT(3,13,13n+3)-3NT(10,13,13n+3)\nonumber\\
&-4NT(4,13,13n+3)+4NT(9,13,13n+3) +NT(5,13,13n+3)-NT(8,13,13n+3)\nonumber\\
&-2NT(6,13,13n+3)+2NT(7,13,13n+3)\equiv0\pmod{13}, \label{con133} \\
& {} \nonumber \\
&M_{\omega}(1,5,5n+4)-M_{\omega}(4,5,5n+4)=2M_{\omega}(3,5,5n+4)-2M_{\omega}(2,5,5n+4), \label{eqc54}\\
& {} \nonumber \\
&M_{\omega}(1,5,5n+i)-M_{\omega}(4,5,5n+i)+2NT(2,5,5n+i)-2NT(3,5,5n+i)\equiv0\pmod{5} \label{concr504} \\
\intertext{for $i=0$, $4$,} \nonumber
& {} \nonumber \\
&M_{\omega}(1,5,5n+2)-M_{\omega}(4,5,5n+2)=2NT(3,5,5n+2)-2NT(2,5,5n+2), \label{eqcr52}\\
& {} \nonumber \\
&NT(1,5,5n+i)-NT(4,5,5n+i) + 2M_{\omega}(2,5,5n+i)-2M_{\omega}(3,5,5n+i)\equiv0\pmod{5} \label{conrc512}\\
\intertext{for $i=1$, $2$,} \nonumber
& {} \nonumber \\
& \sum_{n \geq 0}\big(NT(1,5,5n+4)-NT(4,5,5n+4) + 2M_{\omega}(2,5,5n+4)-2M_{\omega}(3,5,5n+4)\big) q^n \nonumber \\
&\qquad \qquad \qquad \qquad \qquad \qquad \qquad \qquad \qquad \qquad \qquad \qquad \qquad \quad = \frac{-5 (q^5;q^5)_{\infty}^4}{(q)_{\infty} }, \label{idcr54}\\
& {} \nonumber \\
&M_{\omega}(1,5,5n+4)-M_{\omega}(4,5,5n+4)=4NT(4,5,5n+4)-4NT(1,5,5n+4), \label{eqcr54} \\
& {} \nonumber \\
&M_{\omega}(1,7,7n+i)-M_{\omega}(6,7,7n+i)+2M_{\omega}(3,7,7n+i)-2M_{\omega}(4,7,7n+i) \equiv0\pmod{7} \label{concr713}\\
\intertext{for $i=0$, $2$, $5$, $6$,} \nonumber
& {} \nonumber \\
&M_{\omega}(2,7,7n+i)-M_{\omega}(5,7,7n+i)-3M_{\omega}(3,7,7n+i)+3M_{\omega}(4,7,7n+i) \equiv0\pmod{7} \label{concr723} \\
\intertext{for $i=0$, $1$, $4$, $5$.} \nonumber
\end{align}
\end{conjecture}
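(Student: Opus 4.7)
The plan is to address the conjecture in blocks, following the blueprint of Theorems \ref{main1}--\ref{main3}. For the mod $7$, $11$, and $13$ congruences that involve only $NT(m,k,n)$ --- in particular \eqref{con116}, \eqref{con111}, \eqref{con131}, \eqref{con133} --- the first step is to specialize Corollary \ref{legennntkb} to recover the generating function for rank weighted by the total number of parts in ordinary partitions (this is essentially Theorem 3 of \cite{andrews}). Applying the partial-derivative identity \eqref{partial} produces a Hecke-type indefinite double sum; one then extracts the relevant arithmetic progression (such as $11n+6$ or $13n+1$), reduces modulo the prime, and seeks to identify the resulting series with a rank-difference generating function already known to be divisible by that prime. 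For the mod $5$ and $7$ entries the Atkin--Swinnerton-Dyer formulas of \cite{asd} suffice, exactly as in the proofs of Theorems \ref{main1}--\ref{main3}; for mod $11$ and mod $13$ one needs the analogous rank-difference formulas at these primes, which are of Hecke-type indefinite theta series flavor.

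For the exact generating function identities \eqref{id7125}, \eqref{id7135}, and \eqref{idcr54}, the same machinery applies, but rather than reducing modulo a prime one must identify the dissected double sum with the explicit eta quotient on the right-hand side. This should follow from Jacobi triple product manipulations together with standard $5$- and $7$-dissections of $(q;q)_\infty$; the work will be bookkeeping-heavy but largely mechanical once the preceding step has been carried out.

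The entries involving $M_\omega$ require a new tool, namely a crank analogue of Corollary \ref{legennntkb} that tracks the number of $1$'s. Starting from the Andrews--Garvan generating function
\[
\sum_{m \in \mathbb{Z},\, n \geq 0} M(m,n) z^m q^n = \frac{(q;q)_\infty}{(zq;q)_\infty (q/z;q)_\infty},
\]
I would introduce a parameter $x$ marking the number of $1$'s by handling separately the partitions with no $1$'s (where the crank equals the largest part) and those with at least one $1$ (where the crank is the number of parts exceeding the number of $1$'s, minus the number of $1$'s), and then differentiate at $x=1$ in the spirit of \eqref{partial}. Once this generating function is in hand, the congruences \eqref{concr504}, \eqref{conrc512}, \eqref{concr713}, \eqref{concr723} and the exact equalities \eqref{eqc54}, \eqref{eqcr52}, \eqref{eqcr54} should reduce, after $5$- or $7$-dissection, to known crank-difference identities of Garvan, while \eqref{idcr54} should be obtainable by matching eta quotients as in the preceding paragraph.

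The main obstacle I anticipate is twofold. First, for the mod $11$ and $13$ parts, the relevant rank-difference formulas involve indefinite Hecke-type theta series whose dissections are appreciably more delicate than in the $5$ and $7$ cases, and matching the specific conjectured coefficient weights ($-3, 5, -2, 4$ for mod $11$; $1, 6, 3$ and $3, -4, 1, -2$ for mod $13$) to a tractable closed form will require genuine experimentation. Second, setting up the crank generating function with a $1$-counting variable in a form that is regular at $x=1$ is nontrivial, because the split definition of the crank forces one to combine two different generating functions before differentiating; constructing this crank analogue of Corollary \ref{legennntkb} is likely the most technically delicate step, and without it none of the $M_\omega$ conjectures can be attacked by the present method.
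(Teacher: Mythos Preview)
The paper does not prove this statement: it is explicitly presented as an open \emph{Conjecture} in Section~4 (Concluding Remarks), with no proof or sketch offered. There is therefore no paper proof to compare your proposal against.

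That said, your plan is a reasonable research outline rather than a proof, and you yourself flag its gaps correctly. The parts of the conjecture involving only $NT(m,k,n)$ modulo $5$ and $7$ are indeed amenable to the Andrews--Atkin--Swinnerton-Dyer machinery, and the paper confirms this by proving \eqref{newNT1} and \eqref{newNT2} (just above the Conjecture) using precisely that method. But for the mod $11$ and mod $13$ congruences \eqref{con116}--\eqref{con133}, no rank-difference formulas analogous to those of \cite{asd} exist in the literature in a form that would make your reduction step go through; this is not a matter of bookkeeping but a genuine obstruction, which is presumably why the authors left these as conjectures. Similarly, for the $M_\omega$ entries you correctly identify that a crank analogue of Corollary~\ref{legennntkb} tracking the number of ones is required; no such result appears in the paper or in the cited literature, and constructing one is an open problem, not a routine step. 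In short, your proposal accurately diagnoses what would be needed, but the needed ingredients are themselves unproved, so the proposal does not constitute a proof.
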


Here, \eqref{eqc54} implies \cite[Corollary 1.5]{chern1}. Second, do similar congruences and/or relations exist for total number of parts functions associated to other partitions statistics, for example, ranks of Durfee symbols \cite{and}, the first and second residual crank for overpartitions \cite{blo} and $d$th residual crank for overpartitions \cite{ams}, the $k$-rank \cite{gar1} or the $M_d$-rank for overpartitions \cite{cjshs}, \cite{morrill}? Finally, a mock modular perspective (such as in \cite{gar2}, \cite{cjs1}, \cite{cjs2} or \cite{mao}) which explains the occurrences of (\ref{ntmod5}), (\ref{ntmod7}), (\ref{coneqm2o5})--(\ref{coneqm25}) and (\ref{id7125})--(\ref{concr723}) would be most welcome.

\section*{Acknowledgements}
The second author was partially supported by the National Natural Science Foundation of China (Grant No. 12071331 and No. 11971341).

\end{document}